\theoremstyle{definition}
\newtheorem{definition}{Definition}[section]
\newtheorem{example}[definition]{Example}
\theoremstyle{plain}
\newtheorem{proposition}[definition]{Proposition}
\newtheorem{lemma}[definition]{Lemma}
\newtheorem{theorem}[definition]{Theorem}
\author{\textbf{Marek Hy\v{c}ko}\footnote{{\bf
Acknowledgement} The support of the grants VEGA 2/0059/12 and APVV-0178-11 is kindly announced.
}\\Mathematical Institute of Slovak Academy of Sciences\\\v{S}tef\'{a}nikova 49, SK-81473 Bratislava, Slovakia}
\title{Weak pre pseudo effect algebras and generalized weak pre pseudo effect algebras}
\date{}
\begin{document}
\maketitle
\begin{abstract}
Present article concerns the generalization of 
the results of I. Chajda and J. K\"{u}hr to a non-commutative setting resulting into so called (weak) pre pseudo effect algebras and generalized pre pseudo effect algebras. 
\end{abstract}

\section{Introduction}
The article concerns a generalization of pseudo effect algebras and generalized pseudo effect algebras. Pseudo effect algebras were introduced and studied by T. Vetterlein and A. Dvure\v{c}enskij in series of papers \cite{DvVe} as a non-commutative generalization of effect algebras introduced by D. Foulis and M. K. Bennet \cite{FoBe} or equivalently of D-posets introduces by F. K\^{o}pka and F. Chovanec \cite{KoCh}.

Main motivation was to addapt 
results of I. Chajda and J. K\"{u}hr \cite{ChaKu} to a non-commutative setting. The authors introduced the generalization of effect algebras. Their motivation was to find a structure for which the underlying lattice orders define an ortholattice. It is well known that when lattice ordered orthoalgebra is assumed then underlying structure (order, orthocomplementation as orthosupplement, constants) is an orthomodular lattice. Thus in order to obtain ortholattices some tweaking needed to be performed. It turnes out that it is sufficient to omit the condition that orthosupplement is the only element which sums with corresponding element to a unit element. The newly defined structure was called pre-effect algebra and pre-orthoalgebra.

Commutative version of a presented generalization leads to the structure defined by I. Chajda and J. K\"{u}hr.

Moreover, in \cite{ChaKu} the structures without top element were studied, generalizing generalized effect algebras. Also these results were adapted to a non-commutative setting in this contribution. 
It is shown that the well known construction of unitization \cite{HePu}, \cite{PuVi} can also be performed for these structures.


The article is organized as follows, Section 2 contains definition of pre pseudo effect algebras with some basic results, Section 3 concerns weak generalized pre pseudo effect algebras and finally Section 4 contains attempts to define congruences on such structures.

\section{Preliminaries}

First recall definitions of a pre-effect algebra, \cite[Definition 2.1]{ChaKu} and a generalized pre-effect algebra, \cite[Definition 3.1]{ChaKu}.

\begin{definition}
A \emph{pre-effect algebra} is a structure $(A; +, {}', 0, 1)$ where $(A; +, 0)$
is a partial abelian monoid, 1 is an element of A and is a unary operation such
that $a + a' = 1$ for all a ∈ A, and the relation $\leq$ given by the rule

$$a \leq b\mbox{ iff } a + b'\mbox{ is defined}$$
is a partial order. A pre-effect algebra satisfying the condition that $a = 0$
whenever a + a is defined (i.e. $a = 0$ if $a \leq a'$ ) is called a \emph{pre-orthoalgebra}.
\end{definition}

\begin{definition}
A \emph{generalized pre-effect algebra} is a structure $(A; +, -, 0)$
where $+$ and $-$ are partial binary operations on A such that
\begin{enumerate}[(GQEX)]\itemsep=-1mm
\item[(GQE1)] $+$ is commutative, i.e., $a + b = b + a$ if one side is defined,
\item[(GQE2)] $a - a = 0$ for all a ∈ A,
\item[(GQE3)] the relation $\leq$ defined by $a \leq b$ iff $b - a$ exists is a partial order,
\item[(GQE4)] for all $a, b, c \in A$, $a \geq b$ and $a - b \geq c$ iff $b + c$ is defined and $a \geq b + c$,
in which case $(a - b) - c = a - (b - c)$.
\end{enumerate}
\end{definition}

\section{(Weak) Pre pseudo effect algebras}

First we give a definition of a weak pre pseudo effect algebra.

\begin{definition}\label{def:wprepea1}
Let $(A; \oplus, {}^L, {}^R, 0, 1)$ be a partial algebra of type $(2, 1, 1, 0, 0)$ satisfying the following properties:
\begin{enumerate}[({WPPEA}1)]\itemsep=-1mm
\item $\oplus$ is partially associative, i.e. for any $a, b, c\in A$: $a\oplus b$ and $(a\oplus b)\oplus c$ are defined, iff $b\oplus c$ and $a\oplus (b\oplus c)$ are defined and in such case $(a\oplus b)\oplus c = a\oplus (b\oplus c)$;
\item $a\oplus a^R = 1 = a^L\oplus a$;
\item relation $a\leq b$, iff $a\oplus b^R$ is defined, iff $b^L\oplus a$ is defined is a partial order;
\item $1\oplus a$, or $a\oplus 1$ is defined, then $a = 0$;
\item $0$ and $1$ are comparable to all elements of $A$.
\end{enumerate}
The $A$ is called a \emph{weak pre pseudo effect algebra}. If moreover $A$ satisfies the condition:
\begin{itemize}
\item[(PEA)] if $a\oplus b$ is defined then there are $e, f\in A$ such that $a\oplus b = e\oplus a = b\oplus f$,
\end{itemize}
Then $A$ is called \emph{pre pseudo effect algebra}.
\end{definition}

Let us consider a relation $\sqsubseteq_R$ defined in the following way: $a \sqsubseteq_R b$, iff there is $c\in A$ such that $a = b \oplus c$ (summing from the right). Similarly, we defined $\sqsubseteq_L$ as $a\sqsubseteq_L b$, iff there is $d\in A$ such that $a = d \oplus b$ (summing from the left). It is not hard to show that these relations are partial orders. Unlike in the case of effect algebras, these partial orders need not be the same and also need not coincide with $\leq$ order. Condition (PEA) ensures that $\sqsubseteq_L = \sqsubseteq_R$.  Let us recall that (PEA) conditions was originally defined by A. Dvure\v{c}enskij and T. Vetterlein in the setting of pseudo effect algebras.

If we consider partial commutativity condition, i.e. $a \oplus b$ is defined, iff $b \oplus a$ is defined and $a \oplus b = b\oplus a$, then it implies (PEA) property trivially. Thus each commutative weak pre pseudo effect algebra is a commutative pre pseudo effect algebra, i.e. pre effect algebra in the sense of I. Chajda and J. K\"uhr.

The following properties are base for the alternative definition of WPPEAs.

\begin{lemma}\label{lem:wprepea-defs}
Let $A$ be a weak pre pseudo effect algebra according to Definition \ref{def:wprepea1}. Then the following properties hold.
\begin{enumerate}[(i)]\itemsep-1mm
\renewcommand{\labelenumi}{{\rm (\theenumi)}}
\item $1^L = 0 = 1^R$;
\item if $a^R = b^R$, then $a = b$; if $a^L = b^L$, then $a = b$;
\item $1$ is top element in $(A;\leq)$, i.e. for any $b\in A$, $b\leq 1$;
\item $a\oplus 0$ and $0\oplus a$ are defined. Moreover, $a\oplus 0 = a = 0\oplus a$;
\item $0^L = 1 = 0^R$;
\item $0$ is bottom element in $(A; \leq)$, i.e. for any $a\in A$, $0\leq a$.
\end{enumerate}
\end{lemma}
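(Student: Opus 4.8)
The plan is to prove the six items essentially in the order listed, since each feeds the next. Item~(i) should be immediate from the axioms: by (WPPEA2) the terms $1\oplus 1^R$ and $1^L\oplus 1$ are defined (both equal $1$), so (WPPEA4) forces $1^R=0$ and $1^L=0$. For item~(ii) -- injectivity of the two unary operations -- I would exploit the order of (WPPEA3): if $a^R=b^R$, then $a\oplus b^R=a\oplus a^R=1$ and $b\oplus a^R=b\oplus b^R=1$ are both defined, so $a\le b$ and $b\le a$, hence $a=b$ by antisymmetry; the statement for ${}^L$ is handled symmetrically through the other characterization $x\le y\iff y^L\oplus x$ defined. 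Item~(iii) then follows from comparability: given $b$, (WPPEA5) gives $b\le 1$ or $1\le b$, and in the second case $1\oplus b^R$ is defined, so $b^R=0=1^R$ by (WPPEA4) and item~(i), whence $b=1$ by item~(ii) and $b\le 1$ by reflexivity.

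The substantive step is item~(iv), and the main obstacle is precisely the feature distinguishing the ``weak'' setting: one-sided complements need not be unique, so from an identity such as $a^L\oplus(a\oplus 0)=1=a^L\oplus a$ one cannot simply cancel $a^L$ to conclude $a\oplus 0=a$. I would therefore route everything through the partial order and its antisymmetry instead. Definedness of $a\oplus 0$ and $0\oplus a$ comes for free from items (i) and (iii): $a\le 1$ unpacks, via (WPPEA3) and $1^R=0=1^L$, to exactly ``$a\oplus 0$ defined'' and ``$0\oplus a$ defined''. To get $a\oplus 0=a$, prove $a\oplus 0\le a$ and $a\le a\oplus 0$ and invoke antisymmetry. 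For $a\oplus 0\le a$, apply partial associativity (WPPEA1) to the triple $(a^L,a,0)$: since $a^L\oplus a=1$ and $(a^L\oplus a)\oplus 0=1\oplus 0=1$ are defined, the biconditional yields that $a^L\oplus(a\oplus 0)$ is defined, which by (WPPEA3) is exactly $a\oplus 0\le a$. For $a\le a\oplus 0$, set $c:=a\oplus 0$ and apply (WPPEA1) to $(c^L,a,0)$: since $a\oplus 0$ is defined and $c^L\oplus(a\oplus 0)=c^L\oplus c=1$ is defined, the biconditional yields that $c^L\oplus a$ is defined, i.e. $a\le c$. The identity $0\oplus a=a$ is obtained the same way, feeding the associativity biconditional with the triples $(0,a,a^R)$ and $(0,a,b^R)$ where $b:=0\oplus a$, and using $0\oplus 1=1^L\oplus 1=1$. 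The recurring device is that $x\oplus x^R=1$ and $x^L\oplus x=1$ are always legitimately defined, which is what repeatedly activates one half of the ``iff'' in (WPPEA1).

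Finally, items (v) and (vi) should drop out of item~(iv). For (v): by (WPPEA2) $0\oplus 0^R=1$, but by (iv) $0\oplus 0^R=0^R$, so $0^R=1$; dually $0^L=1$ from $0^L\oplus 0=1$. For (vi): by (iv), $0\oplus a^R$ is defined for every $a$ (being equal to $a^R$), so $0\le a$ by (WPPEA3). I expect items (i)--(iii), (v), (vi) to be short and routine; the only place requiring genuine care is the order-theoretic detour in (iv) forced by the absence of cancellation, together with keeping track of which of the two equivalent forms of $\le$ in (WPPEA3) to invoke at each step.
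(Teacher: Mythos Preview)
Your proposal is correct and follows essentially the same approach as the paper: items (i)--(iii) and (v) are argued identically, and for (iv) you use exactly the same two applications of partial associativity (to the triples $(a^L,a,0)$ and $((a\oplus 0)^L,a,0)$) to obtain the two inequalities $a\oplus 0\le a$ and $a\le a\oplus 0$, together with the symmetric versions for $0\oplus a$. The only minor deviation is in (vi): the paper invokes (WPPEA5) and then rules out $a\le 0$ via (WPPEA4), whereas you observe directly from (iv) that $0\oplus a^R$ is always defined, giving $0\le a$ without appealing to (WPPEA5); both routes are short and valid.
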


\begin{proof}
(i) follows from (WPPEA2) and (WPPEA4), since $1 \oplus 1^R$ is defined, then $1^R = 0$.

Let us consider $a^R = b^R$. Then $a \oplus a^R$ and $b\oplus b^R$ are defined, thus $a \oplus b^R$ and $b\oplus a^R$ are defined. From (WPPEA3) follows $a\leq b$ and $b\leq a$, i.e. $a = b$. 

From (WPPEA5) $1$ is comparable to all elements and let us consider that $1\leq c$ for some $c\in A$. Then $1 \oplus c^R$ is defined, thus $c^R = 0 = 1^R$. From (ii) follows that $c = 1$ and thus $1$ is the top element.

From (iii) $a\leq 1$ for any $a\in A$. Thus $a\oplus 1^R = a\oplus 0$ and $1^L\oplus a = 0\oplus a$ are defined. From (WPPEA1) since $(a\oplus 0)^L\oplus (a\oplus 0) = 1$ is defined then also $(a\oplus 0)^L\oplus a$ is defined. So we have that $a\leq (a\oplus 0)$. On the other hand $1 \oplus 0 = (a^L\oplus a)\oplus 0$ is defined and again from (WPPEA1) we get $a^L\oplus (a\oplus 0)$ is defined, thus $a\oplus 0\leq a$. This proves $a = a\oplus 0$.

From (WPPEA2) and (iv) follows $1 = 0 \oplus 0^R = 0^R$.

$0$ is according (WPPEA5) comparable to any element of $A$. Let $a\leq 0$, then $a\oplus 0^R = a\oplus 1$ is defined, i.e. from (WPPEA4) $a = 0$.
\end{proof}

From the previous lemma follows that conditions (WPPEA1) and (WPPEA5) can be replaced by the condition:
\begin{enumerate}[({WPPEA}1{'})]
\item $(A; \oplus, 0)$ is a partial monoid with neutral element $0$.
\end{enumerate} 

For the converse, let us consider that conditions (WPPEA1'), (WPPEA2)-(WPPEA4) hold. Since $0$ is neutral element of a partial monoid $A$, $0\oplus a^R$ and $a^L\oplus 0$ are defined for any element $a\in A$, thus $0$ is comparable (less than or equal) to any element in $A$. Condition (i) of Lemma \ref{lem:wprepea-defs} is proved by the use of properties (WPPEA2) and (WPPEA4) and thus $a\oplus 1^R = a\oplus 0$ and $1^L\oplus a = 0\oplus a$ are defined for any $a\in A$, thus also $1$ is comparable to any element of $A$. This proves (WPPEA5). (WPPEA1) is trivially hidden in (WPPEA1') since any partial monoid is partially associative.

Thus weak pre pseudo effect algebras and pre pseudo effect algebras can be alternatively defined as follows.

\begin{definition}
Let $(A; \oplus, {}^L, {}^R, 0, 1)$ be a partial algebra of type $(2, 1, 1, 0, 0)$ satisfying properties (WPPEA1') and (WPPEA2)-(WPPEA4).
Then $A$ is called a \emph{weak pre pseudo effect algebra}. If moreover $A$ satisfies the condition (PEA), then
$A$ is called \emph{pre pseudo effect algebra}.
\end{definition}


For multiple applications of unary orthosupplement operations we will use the following abbreviation: e.g. for $(a^R)^L$ we will shortly write $a^{RL}$ instead. $a^{LR}$ and other cases are defined similarly.

Properties of (weak) pre pseudo effect algebras can be summarized in the following proposition.
\begin{proposition}
Let $A$ be a weak pre pseudo effect algebra. Then
\begin{enumerate}[(i)]\itemsep-1mm
\renewcommand{\labelenumi}{{\rm (\theenumi)}}
\item $a^{LRL} = a^L$, $a^{RLR} = a^R$;

\item $a^{RL} = a = a^{LR}$
;
\item if $a\oplus b$ is defined, then $a, b\leq a\oplus b$;
\item if $a\oplus b$ is defined and $a\oplus b = a$, then $b = 0$;
\item if $a\oplus b$ is defined and $a\oplus b = b$, then $a = 0$;
\item if $a\oplus b$ is defined and $a\oplus b = 0$, then $a = b = 0$ (positivity of $\oplus$);
\item $a\leq b$, iff $b^R\leq a^R$, iff $b^L\leq a^L$;
\item if $b\leq c$ and $a\oplus c$ is defined, then $a\oplus b$ is defined and $a\oplus b\leq a\oplus c$;
\item if $b\leq c$ and $c\oplus a$ is defined, then $b\oplus a$ is defined and $b\oplus a\leq c\oplus a$;
\item each commutative weak pre pseudo effect algebra is pre pseudo effect algebra, moreover it is also pre effect algebra;
\item each cancelative (if $a \oplus b = a\oplus c$, then $b = c$ and if $b\oplus a = c\oplus a$, then $b = c$) pre pseudo effect algebra is pseudo effect algebra.
\end{enumerate}
\end{proposition}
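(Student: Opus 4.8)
The plan is to establish the eleven items in a dependency order that reflects their mutual logical dependence rather than the printed order: first (iii), then (iv)--(vi), then the ``easy halves'' $a^{RL}\le a$ and $a^{LR}\le a$, then the monotonicity laws (viii)--(ix), then (ii) and hence (i) and (vii), and finally (x)--(xi); each block feeds the next.

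For (iii) I would argue exactly in the style of the proof of Lemma~\ref{lem:wprepea-defs}. If $a\oplus b$ is defined then $(a\oplus b)^L\oplus(a\oplus b)=1$ is defined by (WPPEA2), so partial associativity (WPPEA1) applied to the triple $\bigl((a\oplus b)^L,a,b\bigr)$ yields that $(a\oplus b)^L\oplus a$ is defined and $\bigl((a\oplus b)^L\oplus a\bigr)\oplus b=1$; the first half of this, read through (WPPEA3), is exactly $a\le a\oplus b$. Symmetrically, associativity on $\bigl(a,b,(a\oplus b)^R\bigr)$ shows $b\oplus(a\oplus b)^R$ is defined, i.e.\ $b\le a\oplus b$. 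Items (iv) and (v) then fall out of the very same computation: if $a\oplus b=a$ then $(a\oplus b)^L\oplus a=a^L\oplus a=1$, so the identity above degenerates to $1\oplus b=1$, forcing $b=0$ by (WPPEA4); (v) is the mirror argument through $a\oplus\bigl(b\oplus(a\oplus b)^R\bigr)=1$. For (vi), (iii) gives $a,b\le a\oplus b=0$, and since $0$ is the least element (Lemma~\ref{lem:wprepea-defs}(vi)) antisymmetry of $\le$ forces $a=b=0$.

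Items (i), (ii) and (vii) are entangled with the monotonicity laws, so I would treat them as one block. From (WPPEA2) applied to $a^R$ we get $a^{RL}\oplus a^R=1$, hence $a^{RL}\le a$ by (WPPEA3); dually $a^{LR}\le a$, and re-applying these to $a^R$ and $a^L$ gives $a^{RLR}\le a^R$ and $a^{LRL}\le a^L$. The heart of the matter is the monotonicity law (viii), that $b\le c$ and $a\oplus c$ defined imply $a\oplus b$ defined and $a\oplus b\le a\oplus c$, together with its left-handed twin (ix); this is the step I expect to be the main obstacle. The usual effect-algebra proof --- ``$b\le c$ implies $c=b\oplus d$ for some $d$'', then re-associate --- is unavailable here because $\le$ need not coincide with the summand relations $\sqsubseteq_L,\sqsubseteq_R$, so I would instead exploit both equivalent forms of $\le$ in (WPPEA3) simultaneously and push everything through (WPPEA1) by a careful case analysis, being careful to prove (viii)--(ix) before, or independently of, (i)--(ii). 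Granting (viii)--(ix), the rest is mechanical: feeding $a^{RLR}\le a^R$ and $a\oplus a^R=1$ into (viii) gives $a\oplus a^{RLR}$ defined, i.e.\ $a\le a^{RL}$, and with $a^{RL}\le a$ this gives $a=a^{RL}$; similarly $a=a^{LR}$ using (ix) and $a^L\oplus a=1$. That is (ii), whence (i) is immediate since $a^{LRL}=(a^L)^{RL}=a^L$ and $a^{RLR}=(a^R)^{LR}=a^R$. Finally (vii) follows by rewriting the two conditions of (WPPEA3) via (ii): $a\le b$ iff $a\oplus b^R$ is defined iff $(a^R)^L\oplus b^R$ is defined iff $b^R\le a^R$, and dually for ${}^L$.

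For (x), partial commutativity makes (PEA) trivial (take $e=b$ and $f=a$), so $A$ is a pre pseudo effect algebra; moreover the reduct $(A;\oplus,{}^R,0,1)$ satisfies the Chajda--K\"uhr axioms of a pre-effect algebra \cite{ChaKu} verbatim --- $(A;\oplus,0)$ is a partial abelian monoid by (WPPEA1$'$) and commutativity, $a\oplus a^R=1$ is (WPPEA2), and the induced order is the partial order of (WPPEA3). For (xi), in a cancellative pre pseudo effect algebra $a\oplus x=1=a\oplus y$ (resp.\ $x\oplus a=1=y\oplus a$) forces $x=y$, so $a^R$ and $a^L$ are the \emph{unique} right and left complements of $a$; together with (WPPEA1), (PEA) and (WPPEA4) this is precisely the Dvure\v{c}enskij--Vetterlein axiom system \cite{DvVe} for a pseudo effect algebra.
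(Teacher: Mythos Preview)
Your overall plan founders at the block you yourself flag as ``the main obstacle'': proving (viii)--(ix) \emph{before} (i)--(ii). You say you would ``exploit both equivalent forms of $\le$ in (WPPEA3) simultaneously and push everything through (WPPEA1) by a careful case analysis,'' but you give no actual argument, and I do not see one. The difficulty is concrete: from ``$a\oplus c$ is defined'' you cannot, without (ii), read off any order statement of the form $a\le(\,\cdot\,)$ via (WPPEA3), because (WPPEA3) only speaks about sums of the shape $a\oplus x^R$ or $x^L\oplus a$, and you do not yet know that $c=c^{LR}$. Likewise, even if you manage to show $a\le b^L$, you cannot conclude that $a\oplus b$ is defined without knowing $b=b^{LR}$. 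In the paper's own proof of (viii) both of these rewritings via (ii) are used explicitly (together with (vii), which also rests on (ii)). So the dependency you propose is inverted, and the step you defer is not a routine case analysis but the crux.

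The paper avoids this entirely by proving (i) \emph{first}, directly, with a short iteration trick that you are missing. From (WPPEA2) one gets $a^{RL}\le a$ and $a^{LR}\le a$ as you note; now substitute $a\mapsto a^{RL}$ to obtain $a^{RLRL}\le a^{RL}\le a$, hence $a^{RLRL}\oplus a^R$ is defined. Since $a^{RLRL}=(a^{RLR})^L$, (WPPEA3) reads this as $a^R\le a^{RLR}$; combined with $a^{RLR}=(a^R)^{LR}\le a^R$ this gives $a^{RLR}=a^R$, and dually $a^{LRL}=a^L$. Then (ii) is immediate: $a\oplus a^R$ is defined and $a^R=a^{RLR}=(a^{RL})^R$, so $a\oplus(a^{RL})^R$ is defined, i.e.\ $a\le a^{RL}$, whence $a=a^{RL}$; similarly $a=a^{LR}$. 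With (ii) in hand, (vii) is the one-line rewriting you give, and only then does (viii) go through, exactly by translating ``$a\oplus c$ defined'' into $a\le c^L\le b^L$ and back into ``$a\oplus b$ defined'' via $b=b^{LR}$. Your treatments of (iii)--(vi), (x) and (xi) are fine and match the paper; the fix is simply to reorder: prove (i) by the iteration trick, then (ii), (vii), and only then (viii)--(ix).
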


\begin{proof}
Since $a^L\oplus a^{LR}$, $a^{RL}\oplus a^R$ are defined, we get that $a^{LR}, a^{RL}\leq a$. Substituing $a$ in the inequalities by $a^L$, $a^R$, $a^{LR}$, $a^{RL}$ we get $a^{LRL}\leq a^L$, $a^{RLR}\leq a^R$, $a^{LRLR}\leq a^{LR}$ and $a^{RLRL}\leq a^{RL}$. Thus $a^{RLRL}\leq a$, i.e. $a^{RLRL}\oplus a^R$ is defined, which also means that $a^R\leq a^{RLR}$, i.e. $a^{RLR} = a^R$. Similarly, it is shown that $a^{LRL} = a^L$.

Since $a\oplus a^R$ is defined, by (i) we have that $a\oplus a^{RLR}$ is defined and $a\leq a^{RL}$.  This concludes that $a = a^{RL}$. 

Since $(a\oplus b)^L\oplus (a\oplus b)$ and $(a\oplus b)\oplus (a\oplus b)^R$ are defined, from partial associativity we get that $(a\oplus b)^L\oplus a$ and $b\oplus (a\oplus b)^R$ are defined, which leads to desired inequalities.

Let $a\oplus b = b$, then $b\oplus b^R = (a\oplus b)\oplus b^R = a\oplus (b\oplus b^R) = a\oplus 1$ is defined, i.e. $a = 0$.

Similarly, if $a\oplus b = a$, then $a^L\oplus a = a^L\oplus (a\oplus b) = 1\oplus b$ is defined, i.e. $b = 0$.

Let $a\oplus b = 0$, then from (iii) $a, b\leq a\oplus b = 0$. Since $0$ is bottom element we have $a = b = 0$.

Let $a\leq b$, then $a\oplus b^R = a^{RL}\oplus b^R$ is defined, which means $b^R\leq a^R$. Conversely, let $a^R\leq b^R$, then $b^{RL}\oplus a^R = b\oplus a^R$ is defined, i.e. $b\leq a$. 

Let $b\leq c$ and $a\oplus c$ is defined. Then $c^L\leq b^L$ and since $a\oplus c^{LR}$ is defined, $a\leq c^L\leq b^L$ thus $a\oplus (b^L)^R = a\oplus b$ is defined. $(a\oplus c)^L\oplus (a\oplus c)$, thus $b\leq c\leq ((a\oplus c)^L\oplus a)^R$, i.e. $((a\oplus c)^L\oplus a)^{RL}\oplus b = (a\oplus c)^L\oplus (a\oplus b)$ is defined, so $a\oplus b\leq a\oplus c$.

(ix) is proved similarly as (viii).

Trivially, commutativity implies that (PEA) property is satisfied. Moreover, commutative pre pseudo effect algebra is pre effect algebra. 

From cancellativity it holds that since $a\oplus a^R = 1$, $a^R$ is the unique element such that $a\oplus a^R = 1$. Similarly, $a^L$ is the unique element that $a^L\oplus a = 1$. The (PEA) holds from assumptions. Thus $(A;\oplus, 0, 1)$ is pseudo effect algebra.

\end{proof}

\begin{example}
For $n = 6$, there are 16 non-isomorphic bounded partial orders with bottom and top elements:

\includegraphics{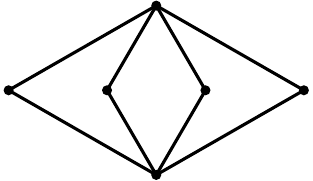}
\includegraphics{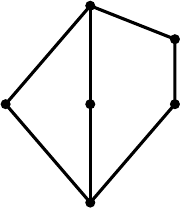}
\includegraphics{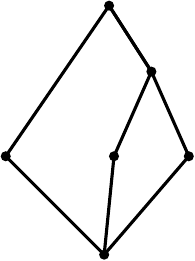}
\includegraphics{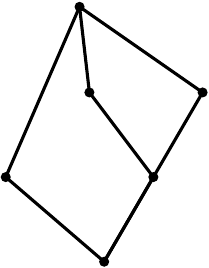}
\includegraphics{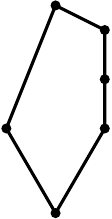}
\includegraphics{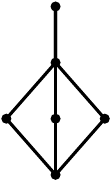}
\includegraphics{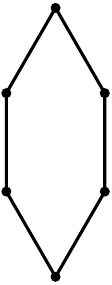}
\includegraphics{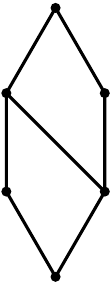}
\includegraphics{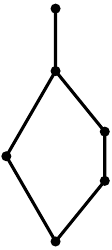}
\includegraphics{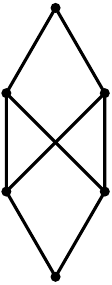}
\includegraphics{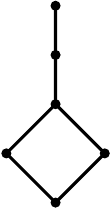}
\includegraphics{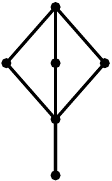}
\includegraphics{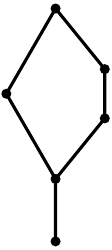}
\includegraphics{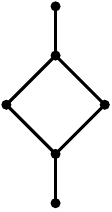}
\includegraphics{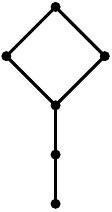}
\includegraphics{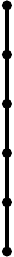}

The structure of a weak pre pseudo effect algebra can be defined only for the following orders:

\includegraphics{orders6_2_01.pdf}
\includegraphics{orders6_2_02.pdf}
\includegraphics{orders6_2_05.pdf}
\includegraphics{orders6_2_07.pdf}
\includegraphics{orders6_2_08.pdf}
\includegraphics{orders6_2_10.pdf}
\includegraphics{orders6_2_14.pdf}
\includegraphics{orders6_2_16.pdf}


\end{example}

\begin{example} Let us consider a bounded poset with double orthocomplementation, i.e. the structure $(P; \leq, {}^L, {}^R, 0, 1)$, where $(P; \leq, 0, 1)$ is a bounded poset and for any $x, y\in P$ hold
\begin{itemize}
\item $x\leq y$, iff $y^L\leq x^L$, iff $y^R\leq x^R$;
\item $x^{LR} = x = x^{RL}$;
\item $0^L = 1 = 0^R$ and $1^L = 0 = 1^R$.
\end{itemize}
Let us equip this structure with a partial operation $\oplus$, which is defined as follows:
$x\oplus y$ is defined, iff $x\leq y^L$, iff $y\leq x^R$ and in such case $x\oplus y = 1$, if both $x, y\neq 0$; $x \oplus 0 = x = 0 \oplus x$, otherwise. Then $(P; \oplus, {}^L, {}^R, 0, 1)$ is a weak pre pseudo effect algebra.

If $1 \oplus a$ is defined, iff $1\leq a^L$, iff $a\leq 0$, i.e. for $a = 0$. Similarly, $a \oplus 1$ is defined, iff $a = 0$.

If $(a \oplus b)\oplus c$ is defined then at least one element of $a$, $b$, $c$ must be equal to $0$. In such case it is very easy to check associativity of $\oplus$.

$a \oplus a^R$ is defined, iff $a \leq a^{RL} = a$ and $a^L\oplus a$ is defined iff $a^L\leq a^L$. Since for any $a\neq 0,1$ are both $a$ and $a^R$ ($a^L$) different to $0$, the result of the sum is desired $1$. If $a = 0$, then $a^R$ ($a^L$) is $1$ and again the sum is $1$. If $a = 1$, then $a^R = 0$ ($a^L = 0$) and the sum is $1$.

The relation  $\leq'$ defined $a\leq' b$, iff $a\oplus b^R$ is defined, iff  $b^L\oplus a$ is defined coincides with $\leq$ relation of a poset $P$. Let $x\leq' y$, iff $x\oplus y^R$ is defined, iff $y^L\oplus x$ is defined. That is $x\leq y^{RL} = y$, ($x\leq y^{LR} = y)$. Let $x\leq y = y^{RL}$, then $x \oplus y^R$ is defined and $x\leq y^{LR}$, then $y^L\oplus x$ is defined, thus $x\leq' y$.

\medskip
Conversely, the ``poset''-reduct ($(\leq; {}^L, {}^R, 0, 1)$-) of each weak pre pseudo effect algebra is a bounded poset with double orthocomplementation.
\end{example}

\section{Generalized pre pseudo effect algebras}

\newcommand{\mR}{\mathop{\backslash}}
\newcommand{\mL}{\mathop{/}}

The rough interpretation of $a \mL b $ (left minus) is $-b + a$ in the group sense and $a \mR b$ (right minus) is $a + (-b)$ in the group sense.

\begin{definition}
Let $(A; +, \mR, \mL, 0)$ be a partial algebra of type $(2, 2, 2, 0)$ satisfying the following properties:
\begin{enumerate}[({GPPEA}1)]\itemsep-1mm
\item $a \mR a = 0 =  a \mL a$;
\item the relation $a\leq b$, iff $b \mR a$ is defined, iff $b \mL a$ is defined is a partial order;
\item $a\mR b$ is defined and $a \mR b \geq c$, iff $c + b$ is defined and  $a\geq c + b$. Moreover  $(a \mR b)\mR c = a\mR (c + b)$;
\item $a\mL b$ is defined and $a \mL b \geq c$, iff $b + c$ is defined and  $a \geq b + c$. Moreover $(a \mL b) \mL c = a \mL (b + c)$.
\end{enumerate}
Then $A$ is said to be a \emph{generalized pre pseudo effect algebra}.
\end{definition}

Each generalized pseudo effect algebra is generalized pre pseudo effect algebra.
\begin{proposition}\label{prop:gprepea}
Let $(A; +, \mR, \mL, 0)$ be a generalized pre pseudo effect algebra. Then
\begin{enumerate}[(i)]\itemsep-1mm
\renewcommand{\labelenumi}{{\rm (\theenumi)}}%
\item $a + 0$, $0 + a$ are defined and $a + 0 = a = 0 + a$;
\item $0$ is the bottom element in $(A; \leq)$;
\item $a\mR 0$ and $a\mL 0$ are defined and $a\mR 0 = a = a\mL 0$;
\item if $a + b$ is defined, then $a +b \geq a,b$ and $(a + b)\mR b \geq a$, $(a+ b)\mL a\geq b$;
\item if $a\mR b$ is defined, then $a\mR b\leq a$, $a\mL(a\mR b) \geq b$ and $a\geq (a\mR b) + b$;
\item if $a\mL b$ is defined, then $a\mL b\leq a$, $a\mR(a\mL b) \geq  b$ and $a\geq b + (a\mL b)$;
\item if $a + b = a$ (or $a + b = b$) then $b = 0$ ($a = 0$);
\item if $a + b = 0$, then $a = b = 0$;
\item $a\mL b = a$, iff $b = 0$; $a\mR b = a$, iff $b = 0$;
\item $+$ is partially associative, i.e. $a + b$ and $(a + b) + c$ are defined iff $b+c$ and $a + (b + c)$ are defined. In such case $(a + b) + c = a + (b + c)$;
\item if $a\geq b\geq c$ then $a\mR c\geq b\mR c$ and $a\mL c \geq b\mL c$;
\item if $a\geq b$ and $a + c$ is defined then $b+c$ is defined, $a+c\geq b+c$ and $(a + c)\mR (b+c)\geq a\mR b$;

\item if $a\geq b$ and $c + a$ is defined then $c+b$ is defined, $c+a\geq c+b$ and $(c + a)\mL (c+b)\geq a\mL b$;

\item $a\geq b\geq c$ then $a\mR c\geq (a\mR b) + (b\mR c)$ and $a\mL c\geq (b\mL c) + (a\mL b)$;

\item $(b\mR a)\mL c$ is defined, iff $(b\mL c)\mR a$ is defined and in this case $(b\mR a)\mL c = (b\mL c)\mR a$.
\end{enumerate}
\end{proposition}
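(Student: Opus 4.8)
The plan is to prove the items in an order that makes each a short consequence of the two residuation-type biconditionals (GPPEA3), (GPPEA4) together with antisymmetry of $\leq$: first (iv)--(vi), which need only the biconditionals and reflexivity of $\leq$; then (i)--(iii) and (vii)--(ix); then the real kernel (x); and finally (xi)--(xv). The recurring device is to read each biconditional either ``forwards'', trading a $+$-statement for a $\mR$- or $\mL$-statement, or ``backwards'', by instantiating its free variable with the compound term appearing on the left-hand side.

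For (iv): in (GPPEA3) take the minuend to be $a+b$ and the free parameter to be $a$. Its right-hand side then reads ``$a+b$ is defined and $a+b\geq a+b$'', which holds, so $(a+b)\mR b$ is defined and $(a+b)\mR b\geq a$ --- that is, $a+b\geq b$ and the stated inequality; the $\mL$-part is symmetric via (GPPEA4). Instantiating the free variable of (GPPEA3) by $a\mR b$ gives $a\geq(a\mR b)+b$, hence $a\mR b\leq a$ by (iv), and one further application yields $a\mL(a\mR b)\geq b$; this is (v), and (vi) is its mirror image. Now (i) is immediate: (v) and (vi) with both arguments equal to $a$, together with (GPPEA1), show $0+a$ and $a+0$ are defined with $a\geq 0+a$ and $a\geq a+0$, while (iv) applied to these now-defined sums supplies the reverse inequalities, so $a=0+a=a+0$ by antisymmetry; (ii) and (iii) follow in a couple more lines, and (vii)--(ix) follow by using $a\mL a=0=a\mR a$ in (GPPEA4), (GPPEA3) to squeeze $b$ (resp.\ $a$) between $0$ and $0$, together with positivity.

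The heart of the proposition is partial associativity (x). Assume $a+b$ and $(a+b)+c$ are defined. From (iv), $(a+b)+c$ dominates $a+b$, $b$ and $c$, so every truncation used below exists; moreover $((a+b)+c)\mR c\geq a+b\geq b$, and reading (GPPEA3) backwards turns this into ``$b+c$ is defined and $(a+b)+c\geq b+c$''. For the remaining definedness I would peel summands off from the left, iterating the \emph{moreover} clause of (GPPEA4):
$$\bigl(\bigl(((a+b)+c)\mL a\bigr)\mL b\bigr)\mL c=\bigl(((a+b)+c)\mL(a+b)\bigr)\mL c=((a+b)+c)\mL\bigl((a+b)+c\bigr)=0,$$
the last equality by (GPPEA1). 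Collapsing the inner two $\mL$'s back together (again by the \emph{moreover} clause) rewrites the left end as $\bigl(((a+b)+c)\mL a\bigr)\mL(b+c)=0$, which in particular asserts $b+c\leq((a+b)+c)\mL a$; feeding this into (GPPEA4) with subtrahend $a$ and free parameter $b+c$ yields exactly ``$a+(b+c)$ is defined and $(a+b)+c\geq a+(b+c)$''. Running the mirror argument with $\mR$ (peeling from the right, starting from $a+(b+c)$) gives the reverse inequality, so the two sums coincide. This peeling-and-collapsing step is the main obstacle; its correctness rests on reading each \emph{moreover} identity as ``defined on one side exactly when defined on the other, with equal values''.

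With (x) in hand the rest is routine. (xi) drops out of (v) and (GPPEA3); (xii) and (xiii) use (xi), associativity and the biconditionals; (xiv) follows by rewriting $(a\mR c)\mR(b\mR c)$ as $a\mR\bigl((b\mR c)+c\bigr)$ via the \emph{moreover} clause of (GPPEA3) and then invoking (xi), since $(b\mR c)+c\leq b$ by (v). Finally, for (xv): reading (GPPEA4) and then (GPPEA3) shows that for every $z$ the statement ``$(b\mR a)\mL c$ is defined and $(b\mR a)\mL c\geq z$'' is equivalent to ``$c+z$ and $(c+z)+a$ are defined and $b\geq(c+z)+a$''; reading (GPPEA3) and then (GPPEA4) shows ``$(b\mL c)\mR a$ is defined and $(b\mL c)\mR a\geq z$'' is equivalent to ``$z+a$ and $c+(z+a)$ are defined and $b\geq c+(z+a)$''. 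By partial associativity (x) these two conditions are literally identical, so $p:=(b\mR a)\mL c$ is defined iff $q:=(b\mL c)\mR a$ is, and when both are defined $p\geq z\Leftrightarrow q\geq z$ for every $z$; taking $z:=p$ and $z:=q$ and using antisymmetry gives $p=q$.
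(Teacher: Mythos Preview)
Your approach is essentially the paper's: both arguments derive everything from the two residuation biconditionals (GPPEA3)--(GPPEA4) and their ``moreover'' identities, peeling sums apart with $\mR$ or $\mL$ and reassembling. Your reordering (proving (iv)--(vi) before (i)--(iii)) is a little cleaner than the paper's, your treatment of (x) is the same peeling argument written out more explicitly (the detour through the triple-$\mL$ equalling $0$ is unnecessary but harmless), and your ``for every $z$'' characterization in (xv) is slicker than the paper's two separate inequalities.

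One genuine slip: in your sketch of (xiv) you write that $a\mR\bigl((b\mR c)+c\bigr)\geq a\mR b$ follows ``by invoking (xi), since $(b\mR c)+c\leq b$''. But (xi) is monotonicity of $\mR$ in the \emph{minuend} (same subtrahend, larger minuend gives larger difference); what you need here is anti-monotonicity in the \emph{subtrahend} (same minuend, smaller subtrahend gives larger difference), and that is not (xi). The fact you need is still available from what you have already proved: for $x\leq y\leq a$, (v) gives $(a\mR y)+y$ defined and $\leq a$; then (xiii) gives $(a\mR y)+x$ defined and $\leq(a\mR y)+y\leq a$; then (GPPEA3) yields $a\mR x\geq a\mR y$. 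Alternatively, argue as the paper does: from $b\geq(b\mR c)+c$ and $a\geq(a\mR b)+b$ use (xiii) and (x) to get $a\geq(a\mR b)+\bigl((b\mR c)+c\bigr)=\bigl((a\mR b)+(b\mR c)\bigr)+c$, and conclude by (GPPEA3). Either route closes the gap, but as written your citation of (xi) does not.
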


\begin{proof}

Since $0 = a \mR a \geq 0$, iff $0+a$ is defined and $a\geq 0+a$ and $0 \mR 0 = (a\mR a)\mR 0 = a\mR (0+a)$. Similar reasoning gets us that $a+0$ is defined and $a\geq a+0$ and $0\mL 0 = (a\mL a)\mL 0 = a\mL(a+0)$. Since $a+0\geq a+0$ then $(a+0)\mL a$ is defined and it is greater than or equal to $0$. Thus $a+0\geq a$ and analogously $0+a \geq a$. Together with previous result we get that $0+a = a = a+0$. From the fact that $a\geq 0+a$ we have that $a\mL 0$ is defined, i.e. $0\leq a$ for any $a\in A$, thus $0$ is the bottom element in partial ordered set $(A; \leq)$. From $a \mR 0\geq a\mR 0$ we get $a\geq (a\mR 0) + 0$ and $a\mL(a\mR0)\geq 0$, i.e.$ a\geq (a\mR0)$. Thus $a = a\mR 0$. In a similar way we are able to prove $a = a\mL 0$.

From $a+b\geq a+b$ we get that $(a+b)\mR b$ and $(a+b)\mL a$ are defined, i.e. $a + b\geq a,b$.

Since $a\mR b \geq a\mR b$, iff $a\geq (a\mR b) + b$, iff $a\mL(a\mR b)\geq b$.  Similarly it is proved that $a\geq b + (a\mL b)$ and $a\mR (a\mL b)\geq b$.

Let $a + b  = a$, then $0 = (a + b)\mL a \geq b$. Since $0$ is the bottom element, $b$ is equal to $0$.

Since $(a + b) \mR b \geq a$, then $0 \mR b$ is defined and thus $0\geq b$, $b = 0$. Then $0\geq a$, thus $a = 0$.

Let $a\mL b = a$, then $b + (a\mL b)$ is defined and $a\geq b + (a\mL b) = b + a$. On the other hand if $b + a$ is defined then $b + a\geq a$. Thus we get that $b + a = a$ which is possible only if $b = 0$. The converse implication follows from (iii). The proof for right minus is analogous.

Let us assume that $a+b$ and $(a + b) + c$ exist. Then from (iv) of Proposition \ref{prop:gprepea} applied twice we get that $[(a + b) + c]\mR c\geq (a + b)$ and $[[(a+b)+c]\mR c \mR b\geq a$, i.e. $[(a+b)+c]\mR(b+c)\geq a$ and $[(a+b) +c \geq a + (b+c)$ which shows that both $b+c$ and $a + (b + c)$ are defined.

In a similar way, if we assume that $b+c$ and $a + (b + c)$ are defined then $[a + (b+c)]\mL a\geq b+c$ and $[[a + (b+c)]\mL a]\mL b = [a+(b+c)]\mL(a+b)\geq c$, i.e. $a + (b+c)\geq (a+b) + c$ which shows that both $a+b$ and $(a+b) +c$ are defined. The equality follows from the previously proved inequalities.

Let $a\geq b \geq c$. Then $a\geq b\geq (b\mR c) + c$ and $a\mR c\geq b\mR c$. Similarly, $a\geq b\geq c + (b\mL c)$ and $a\mL c\geq b\mL c$.

Let $a\geq b$ and $a + c$ be defined. Then $(a+ c)\mR c\geq a\geq b$, iff $a + c\geq b+c$. Moreover, $(a  + c)\mR (b+c) = ([a + c)\mR c]\mR b\geq a\mR b$.

Let us assume $a\geq b\geq c$. Then $b\geq (b\mR c) + c$, $a\geq a\mR b + b\geq a\mR b + b\mR c + c$ and $a\mR c\geq a\mR b  + b\mR c$.

Let us assume that $(b\mR a)\mL c$ is defined, then $(c + [(b\mR a)\mL c]) + a\leq b$. Since plus is partially associative we get $c + ([b\mR a)\mL c] + a) \leq b$ and $(b\mR a)\mL c\leq (b\mL c)\mR a$.

Similarly, from the existence of $(b\mL c)\mR a$ we are able to infer that $(b\mL c)\mR a\leq (b\mR a)\mL c$. These two inequalities give desired equality.

\end{proof}

\begin{theorem}\label{thm:opconstr}
Let $A$ be a weak generalized pre pseudo effect algebra. Then
\begin{enumerate}[(i)]
\item operation $+$ and partial order $\leq$ uniquely determine operations $\mL$ and $\mR$;
\item either of the operations $\mL$ and $\mR$ uniquely determine the remaining other operations and partial order.
\end{enumerate}
\end{theorem}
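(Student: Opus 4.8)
The plan is to express every piece of the structure as an order-theoretic extremum of the remaining data, reading these descriptions directly off (GPPEA2)--(GPPEA4) and Proposition~\ref{prop:gprepea}.

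For part (i), note first that the domain of $\mR$ and of $\mL$ is already fixed by $\le$: by (GPPEA2), $a\mR b$ (equivalently $a\mL b$) is defined precisely when $b\le a$. On that domain I would show
$$a\mR b=\max\{c\in A: c+b\mbox{ is defined and }c+b\le a\},\qquad a\mL b=\max\{c\in A: b+c\mbox{ is defined and }b+c\le a\}.$$
Each equality is just a rereading of (GPPEA3), resp.\ (GPPEA4): the displayed set contains $a\mR b$ (resp.\ $a\mL b$) by Proposition~\ref{prop:gprepea}(v) (resp.\ (vi)), and the implication ``$c+b$ defined and $a\ge c+b$'' $\Rightarrow$ ``$a\mR b\ge c$'' of (GPPEA3) says exactly that $a\mR b$ is an upper bound of that set. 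Since the right-hand sides refer only to $+$ and $\le$, this proves (i).

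For part (ii), I would recover from $\mR$ alone first $\le$, then $+$, then $\mL$. The order is immediate: $a\le b$ iff $b\mR a$ is defined, by (GPPEA2). For $+$, set $S_{x,y}=\{a\in A: a\mR y\mbox{ is defined and }a\mR y\ge x\}$, a set described purely in terms of $\mR$ and the already-recovered order. Then $x+y$ is defined iff $S_{x,y}\ne\emptyset$, and in that case $x+y=\min S_{x,y}$: if $x+y$ is defined then $(x+y)\mR y\ge x$ by Proposition~\ref{prop:gprepea}(iv), so $x+y\in S_{x,y}$; and for any $a\in S_{x,y}$, (GPPEA3) forces $x+y$ to be defined with $a\ge x+y$, so $x+y$ is simultaneously a member of and a lower bound for $S_{x,y}$. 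The same implication shows $S_{x,y}=\emptyset$ whenever $x+y$ is undefined. Finally $\mL$ is recovered via part (i), now that $+$ and $\le$ are in hand; equivalently and more directly, the same manipulation of (GPPEA3)--(GPPEA4) gives $a\mL b=\max\{c\in A: a\mR c\mbox{ is defined and }a\mR c\ge b\}$ (membership of $a\mL b$ in this set coming from Proposition~\ref{prop:gprepea}(vi)). The mirror statement, that $\mL$ determines $+$, $\mR$ and $\le$, then follows by running this argument in the opposite algebra $(A;+^{\mathrm{op}},\mL,\mR,0)$ with $x+^{\mathrm{op}}y:=y+x$, under which (GPPEA1)--(GPPEA4) are invariant up to interchanging $\mL$ and $\mR$.

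The step needing the most care is the reconstruction of $+$. One is tempted to invert a pointwise identity such as $x=(x+y)\mR y$, but in a ``pre'' setting $+$ need not be cancellative, so $(x+y)\mR y$ may genuinely lie strictly above $x$; hence $x+y$ must be described as the least $a$ with $a\mR y\ge x$, not by a closed formula, and one must confirm that this prescription yields exactly the true domain of $+$ --- which is precisely what $(x+y)\mR y\ge x$ from Proposition~\ref{prop:gprepea}(iv) guarantees. The remaining verifications (attainment of the extrema, the opposite-algebra bookkeeping) are routine.
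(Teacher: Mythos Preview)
Your proposal is correct and essentially reproduces the paper's own proof: the paper likewise characterizes $a\mR b$ and $a\mL b$ as the suprema of $R_{a,b}=\{z:z+b\mbox{ defined},\ a\ge z+b\}$ and $L_{a,b}=\{z:b+z\mbox{ defined},\ a\ge b+z\}$ for part (i), and for part (ii) recovers $\le$ from the domain of $\mR$ and then $a+b$ as the infimum of $P_{a,b}=\{z:a\le z\mR b\}$, exactly your $S_{x,y}$. The only cosmetic differences are that you write ``max/min'' where the paper writes ``sup/inf'' (harmless, since membership is established), and that you invoke the opposite algebra for the $\mL$-half whereas the paper simply says ``in similar fashion''; your extra direct formula $a\mL b=\max\{c:a\mR c\ge b\}$ is a nice bonus not in the paper.
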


\begin{proof}
(i) Let $A$ be a weak generalized pre pseudo effect algebra. And let us fix the partial order and operation $+$. Then knowing only these two informations we are able to reconstruct both kinds of minus operations. First observe that from (GPPEA3) follows that if $a\mR b$ is defined then it is the largest element $z$ such that $a\geq z+b$, i.e. it is the supremum of the set $R_{a,b}:=\{z\in A: z + b \mbox{ is defined and } a\geq z + b \}$. Let $a\mR b$ is defined. Then $a\mR b\geq a\mR b$ and from (GPPEA3) it follows that $(a\mR b) + b$ is defined and $a\geq (a\mR b) + b$. Thus $a\mR b$ belongs to $R_{a,b}$. Let $t\in R_{a,b}$, then $a\geq t + b$ and $a\mR b\geq t$, thus $a\mR b$ is supremum of $R_{a,b}$. Similar reasoning using (GPPEA4) leads to the fact that $a\mL b$ if exists is the supremum of the set $L_{a,b}:=\{z\in A: b + z \mbox{ is defined and } a\geq b + z \}$.

(ii) Let $\mR$ be fixed. Then due to (GPPEA2) we have determined the partial order $\leq$. Now we prove that $+$ can be uniquely defined and then from part (i) the remaining $\mL$ can be defined. Let us consider sets $P_{a,b}:= \{z\in A: a\leq z\mR b\}$ for any pair of $a,b\in A$. Then there are two cases. (A) $P_{a,b} = \emptyset$, (B) $P_{a,b}\neq \emptyset$. If $a + b$ is defined then by Proposition 3.2 (iv) $P_{a,b}$ cannot be empty. Thus the case (A) represents the situation that $a + b$ is undefined. Moreover, $a + b\in P_{a,b}$. Let $t\in P_{a,b}$, then $a\leq t\mR b$ implies by (GPPEA3) $a + b\leq t$. Thus $a + b$ is the infimum of $P_{a,b}$.

In similar fashion starting from $\mL$ by defining $P^l_{a,b}:=\{z\in A: b\leq z\mL a\}$ we are able to reconstruct $+$. 
\end{proof}

We note that when generating finite models, not all left (or right) minuses satisfying axiom ... lead to a weak generalized pre pseudo effect algebra. 

\begin{example}
Let us consider $\{0, a, b, 1\}$ with partial order such that $0$ is bottom, $1$ is top element and $a$ and $b$ are incomparable. Left minus candidate

\begin{center}
\begin{tabular}{|c|c|c|c|c|}
\hline
$\mL$ & \textbf{0} & \textbf{a} & \textbf{b} & \textbf{1} \\ 
\hline
\textbf{0} &   0 & - & - & - \\
\hline
\textbf{a} &   a & 0 & - & - \\
\hline
\textbf{b} &   b & - & 0 & - \\
\hline
\textbf{1} &   1 & a & a & 0 \\
\hline
\end{tabular}
\end{center}
generate according to the algorithm in the previous theorem the following operation $+$:
\begin{center}
\begin{tabular}{|c|c|c|c|c|}
\hline
$+$ & \textbf{0} & \textbf{a} & \textbf{b} & \textbf{1} \\ 
\hline
\textbf{0} &   0 & a & b & 1 \\
\hline
\textbf{a} &   a & 1 & - & - \\
\hline
\textbf{b} &   b & 1 & - & - \\
\hline
\textbf{1} &   1 & - & - & - \\
\hline
\end{tabular}\ .
\end{center}

Indeed, $P^l_{0,0} = \{0,a,b,1\}$, $P^l_{a,0} = P^l_{0,a} = \{a,1\}$, $P^l_{0,b} = P^l_{b,0} = \{b, 1\}$, $P^l_{0,1} = P^l_{1,0} = \{1\}$. Other combinations lead to empty set.

But then $1\mR a$ cannot be defined. Actually, $R_{1,a} = \{a, b\}$ and this set does not have supremum.

\end{example}

\begin{example}
Even $+$ from $\mL$ cannot be sometimes defined.

Let us consider $\mL$ candidate given by the table with depicted considered partial order:
\begin{center}
\begin{tabular}{|c|c|c|c|c|c|}
\hline
$\mL$ & \textbf{0} & \textbf{a} & \textbf{b} & \textbf{c} & \textbf{d} \\ 
\hline
\textbf{0} &   0 & - & - & - & - \\
\hline
\textbf{a} &   a & 0 & - & - & - \\
\hline
\textbf{b} &   b & - & 0 & - & - \\
\hline
\textbf{c} &   c & 0 & a & 0 & - \\
\hline
\textbf{d} &   d & 0 & a & - & 0 \\
\hline
\end{tabular}
\qquad
\raise1.3cm\vbox to 0pt{\hbox{\includegraphics[height=2.5cm]{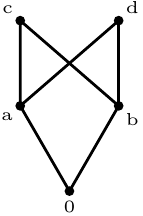}}}
\end{center}

If we try to define $b + a$, then $P^l_{b,a} = \{c, d\}$, but such set does not have infimum.

\end{example}

\begin{example}
Finally, even when $\mL$ successfuly generates $+$ and $+$ generates $\mR$, the resulting structure need not be a weak generalized pre pseudo effect algebra.

Left $\mL$ candidate is given by the table. The other tables contain computed $+$ and $\mR$, respectively.
Partial order is as follows $0 < a < b, c$, where $b$ and $c$ are incomparable.

\begin{center}
\begin{tabular}{|c|c|c|c|c|}
\hline
$\mL$ & \textbf{0} & \textbf{a} & \textbf{b} & \textbf{c} \\ 
\hline
\textbf{0} &   0 & - & - & - \\
\hline
\textbf{a} &   a & 0 & - & - \\
\hline
\textbf{b} &   b & a & 0 & - \\
\hline
\textbf{c} &   c & a & - & 0 \\
\hline
\end{tabular}\quad
\begin{tabular}{|c|c|c|c|c|}
\hline
$+$ & \textbf{0} & \textbf{a} & \textbf{b} & \textbf{c} \\ 
\hline
\textbf{0} &   0 & a & b & c \\
\hline
\textbf{a} &   a & a & - & - \\
\hline
\textbf{b} &   b & - & - & - \\
\hline
\textbf{c} &   c & - & - & - \\
\hline
\end{tabular}\quad
\begin{tabular}{|c|c|c|c|c|}
\hline
$\mR$ & \textbf{0} & \textbf{a} & \textbf{b} & \textbf{c} \\ 
\hline
\textbf{0} &   0 & - & - & - \\
\hline
\textbf{a} &   a & 0 & - & - \\
\hline
\textbf{b} &   b & a & 0 & - \\
\hline
\textbf{c} &   c & a & - & 0 \\
\hline
\end{tabular}
\end{center}

If it was weak generalized pre pseudo effect algebra, then since $a\oplus a = a$, then from  is defined, according Proposition 4.2 (vii), $a$ should equal to $0$, which is a contradiction. Another argument that it is not WGPPEA, (GPPEA3) axiom is violated, namely $a\oplus a = a\leq a$, but $a\mR a = 0\not \geq a$.
\end{example}

Each effect and pseudo-effect algebra can be made into generalized (pseudo-)effect algebra. The same holds for (weak) pre pseudo effect algebras.

The necessary and sufficient conditions for possibility do define operation $\mL$, $\mR$ are summarized in the next theorem. 

\begin{theorem}
Let $(A; +, {}^L, {}^R, 0, 1)$ be a (weak) pre pseudo effect algebra with the partial order $\leq$ defined by (WPPEA3). For any $a, b\in A$, $a\geq b$ let denote the sets $L_{a,b}:= \{k\in A: b + k\leq a\}$ and $R_{a,b}:=\{k\in A: k + b\leq a\}$ and let assume that such sets posess supremum denoted $l_{a,b}$, $r_{a,b}$, respectively. Then if we define $a\mL b = l_{a,b}$, $a\mR b = r_{a,b}$ for $a\geq b$ and otherwise undefined, the structure $(A; +, \mL, \mR, 0)$ is a (weak) generalized pre pseudo effect algebra.
\end{theorem}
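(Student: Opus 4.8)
The plan is to work throughout with the two sets from the statement and to pin them down explicitly. Write $\oplus$ for the partial operation of $A$ (the $+$ of the statement). Recall first that combining (WPPEA3) with the laws $x^{RL}=x=x^{LR}$ gives, for all $x,y\in A$, that $x\oplus y$ is defined iff $x\leq y^L$ iff $y\leq x^R$. Now fix a pair $a\geq b$ and put $m:=b\oplus a^R$, which is defined since $b\leq a$. The crucial step is to show
\[
R_{a,b}=\{k\in A : k\leq m^L\},\qquad L_{a,b}=\{k\in A : k\leq (a^L\oplus b)^R\}.
\]
For the first identity: $k\oplus b$ is defined iff $k\leq b^L$, and, granted this, $k\oplus b\leq a$ holds iff $(k\oplus b)\oplus a^R$ is defined; since $b\oplus a^R$ is defined, partial associativity (WPPEA1) rewrites this last condition as ``$k\oplus m$ is defined'', i.e.\ $k\leq m^L$; and since $b\leq m$ forces $m^L\leq b^L$, the side condition $k\leq b^L$ is subsumed. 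The second identity is the left-handed mirror. Hence both sets are principal down-sets, so the suprema hypothesised in the statement exist automatically and are in fact maxima; in particular $a\mR b=(b\oplus a^R)^L$ and $a\mL b=(a^L\oplus b)^R$ whenever $a\geq b$.

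With these facts the axioms fall out quickly. For (GPPEA1): $a\mR a=(a\oplus a^R)^L=1^L=0$ by (WPPEA2) and Lemma~\ref{lem:wprepea-defs}, and dually $a\mL a=(a^L\oplus a)^R=1^R=0$. Axiom (GPPEA2) is built into the construction, since $b\mR a$ and $b\mL a$ are declared defined exactly when $a\leq b$ for the order of (WPPEA3), which is already a partial order. For (GPPEA3), the equivalence is just the statement ``$a\mR b=\max R_{a,b}$ and $R_{a,b}$ is downward closed'': $a\mR b\geq c$ iff $c\in R_{a,b}$ iff $c+b$ is defined and $a\geq c+b$, and the latter already forces $b\leq a$, so $a\mR b$ is indeed defined. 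For the identity, one checks that each side of $(a\mR b)\mR c=a\mR(c+b)$ is defined precisely when $c+b$ is defined and $a\geq c+b$, and then, using $x^{LR}=x$ together with associativity,
\[
(a\mR b)\mR c=\bigl(c\oplus(b\oplus a^R)\bigr)^L=\bigl((c\oplus b)\oplus a^R\bigr)^L=a\mR(c+b).
\]
Axiom (GPPEA4) is obtained by interchanging left and right throughout: replace $R_{\cdot,\cdot}$ by $L_{\cdot,\cdot}$, $(b\oplus a^R)^L$ by $(a^L\oplus b)^R$, and apply associativity on the left. Finally, if $A$ additionally satisfies (PEA), then this condition is inherited verbatim by $+=\oplus$, which covers the non-parenthetical case.

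I expect the only real obstacle to be the first step: recognising that $R_{a,b}$ and $L_{a,b}$ are principal down-sets. This is what simultaneously turns the assumed suprema into genuine maxima of an explicit form and reduces every axiom to a one-line computation. After that everything is routine, the only place demanding a little attention being the bookkeeping of which partial sums are defined at each stage in the ``moreover'' clauses of (GPPEA3) and (GPPEA4).
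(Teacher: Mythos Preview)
Your argument is correct. The key difference from the paper's own proof is one of order: the paper first verifies (GPPEA1)--(GPPEA4) abstractly, treating $r_{a,b}$ and $l_{a,b}$ only as suprema and, for the ``moreover'' identity in (GPPEA3), proving the set equality $R_{r_{a,b},c}=R_{a,c+b}$ by two inclusions; only in a remark \emph{after} the proof does it observe that in fact $a\mR b=(b+a^R)^L$ and $a\mL b=(a^L+b)^R$. You instead establish these explicit formulae first, by showing that $R_{a,b}$ and $L_{a,b}$ are principal down-sets, and then read off every axiom as a one-line computation. Your route buys two things: the supremum hypothesis becomes automatically satisfied (you note this), and it sidesteps a small delicacy in the paper's argument, namely the tacit use of $r_{a,b}\in R_{a,b}$ when deducing that $r_{a,b}+b$ is defined. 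The paper's route, on the other hand, is closer in spirit to how Theorem~\ref{thm:opconstr} characterises $\mR$ and $\mL$ purely via $+$ and $\leq$. Substantively the two proofs coincide; the paper simply separates into ``theorem'' and ``subsequent remark'' what you fuse into a single pass.
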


\begin{proof}
Since $0\in L_{a,b}, R_{a,b}$, for $a\geq b$, the conserned sets are non-empty so the assumption defined actual values for $l_{a,b}$ and $r_{a,b}$, $a\geq b$.

We directly prove each of the axiom of generalized pre pseudo effect algebras.

(GPPEA1) According to Proposition 2.4 (ii) and (iii) we have $\{k\in A: a + k\leq a\} = \{k\in A: a + k = a\} = \{0\}$. Thus $l_{a,a} =  0$ for any $a\in A$. Using Proposition 2.4 (ii) and (iv) we prove that $r_{a,a} = 0$.

(GPPEA2) According to definition $\mR$ and $\mL$ are defined only for pairs $a\geq b$.

(GPPEA3) Let $a\mR b$ be defined and $a\mR b \geq c$. Then $r_{a,b}\geq c$. Since $r_{a,b}$ is the element of $R_{a,b}$ then $r_{a,b} + b$ is defined. Since $c\leq r_{a,b}$ by Proposition 2.4 (viii) we have $c + b\leq r_{a,b} + b\leq a$. Conversely, let $c + b$ be defined and $a\geq c + b$. Then $c\in R_{a,b}$ and $a\mR b = r_{a,b}\geq c$.

We need to prove equality of the following two sets

$$R_{r_{a,b},c} = \{k: k + c \leq r_{a,b}\},$$
$$R_{a,c + b} = \{k: k + (c + b) \leq a\}.$$

Let $t\in R_{r_{a,b},c}$. Then $t + c \leq r_{a,b}$. Since $r_{a,b} + b$ is defined $(t + c) + b = t + (c + b) \leq r_{a,b} + b\leq a$, thus $t\in R_{a, c+b}$.

Let now $t\in R_{a, b+c}$. Then $t + (c + b)  = (t + c) + b \leq a$. Thus $t + c\in R_{a,b}$ and $t + c \leq r_{a,b}$. This implies $t\in R_{r_{a,b},c}$. 

So we have proved that $(a\mR b)\mR c = a\mR (c + b)$.

Axiom (GPPEA4) can be proved by similar reasoning using sets $L_{a,b}$.

Since (PEA) condition concerns only operation $+$, it is trivially preserved.

Necessity proof, let us assume that (W)PPEA can be made into (W)GPPEA. Then preserved order and operation $+$ allow us to compute according to Theorem \ref{thm:opconstr} the remaing $\mL$ and $\mR$. The construction conctains precisely sets $L_{a,b}$ and $R_{a,b}$ which are either empty or posses supremum. As it was shown it the beginning of the proof, for $a\geq b$, such sets are non-empty and thus they need to posess supremum. For other pairs of $a$, $b$, the sets are empty.
\end{proof}

It is interesting to note, that similarly as in the cases of effect and pseudo-effect algebras operations $\mL$ and $\mR$ are defined explicitely. Namely, for $a\geq b$, $a\mL b = (a^L + b)^R$ and $a\mR b = (b + a^R)^L$. Since the order, both expressions are well defined. For other pairs of $a$, $b$ the operations are undefined. 

According to previous theorem, it is sufficient to prove that such elements are for $a\geq b$, suprema of $L_{a,b}$ and $R_{a,b}$, respectively. Since for each $k\in L_{a,b}$ $b + k \leq a$, then from (WPPEA3) $a^L + (b + k)$ is defined, which means that $(a^L + b) + k = (a^L + b)^{RL} + k$ is defined and again from (WPPEA3) this means that $k\leq (a^L + b)^R$. Thus any element of $L_{a,b}$ is less than or equal to $(a^L + b)^R$. It suffice to show that the expression belongs to $L_{a,b}$. The reasoning is in backward direction, $b\leq a^L + b$, i.e. $b + (a^L + b)^R$ is defined, and from (WPPEA2) $(a^L + b) + (a^L + b)^R = a^L + (b + (a^L + b)^R)$ is defined, i.e. $b + (a^L + b)^R \leq a$. For $R_{a,b}$ it is shown similarly.

\begin{proposition}\label{prop:unitization}
Let $(A; +, \mR, \mL, 0)$ be a generalized pre pseudo effect algebra. Let us consider disjunctive copy of $A$, denoted as $A^*$, and let us denote its elements as $a^*$ for each corresponding $a\in A$. Let us define operation $+_p$ as following:
$a +_p b$ is defined, iff $a + b$ is defined and $a +_p b = a + b$, $a, b\in A$; $a +_p b^*$ is defined, iff $b\geq a$ and $a +_p b^* = (b\mR a)^*$; $b^* +_p a$ is defined, iff $b\geq a$ and $b^* +_p a = (b\mL a)^*$; $a^* +_p b^*$ is never defined. For each element $a\in A$, let $a^R = a^L = a^*$ and for each element $a^*\in A^*$ $(a^*)^R = (a^*)^L = a$. Then $(A\cup A^*; +_p, {}^R, {}^L, 0, 0^*)$ is a weak pre pseudo effect algebra.
\end{proposition}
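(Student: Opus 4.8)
The plan is to verify the alternative axiom list for a weak pre pseudo effect algebra --- namely (WPPEA1$'$) together with (WPPEA2)--(WPPEA4) --- for the structure $(A\cup A^*;+_p,{}^R,{}^L,0,0^*)$, the constant playing the role of $1$ being $0^*$. The pointwise parts are quick. Neutrality of $0$: for $a,b\in A$ we have $0+_p a=0+a=a=a+0=a+_p 0$ by Proposition~\ref{prop:gprepea}(i), and for $a^*\in A^*$, since $0$ is the bottom of $A$ (Proposition~\ref{prop:gprepea}(ii)) we get $0\le a$, whence $0+_p a^*=(a\mR 0)^*=a^*$ and $a^*+_p 0=(a\mL 0)^*=a^*$ by Proposition~\ref{prop:gprepea}(iii). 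Axiom (WPPEA2) is immediate from (GPPEA1): for $a\in A$, $a+_p a^R=a+_p a^*=(a\mR a)^*=0^*$ and $a^L+_p a=a^*+_p a=(a\mL a)^*=0^*$, and for $a^*\in A^*$ the two required identities are $a^*+_p a=(a\mL a)^*=0^*$ and $a+_p a^*=(a\mR a)^*=0^*$. For (WPPEA4): a $+_p$-sum involving $0^*$ can be defined only when the other argument lies in $A$ and is $\le 0$, hence equals $0$ (and then the sum is $0^*$), while $a^*+_p b^*$ is never defined; so $0^*+_p x$ or $x+_p 0^*$ being defined forces $x=0$.

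The main computational work is partial associativity of $+_p$, which I would dispatch by splitting into eight cases according to whether each of $x,y,z$ lies in $A$ or in $A^*$. Any case with at least two starred arguments is vacuous, since both $(x+_p y)+_p z$ and $x+_p(y+_p z)$ then necessarily require a forbidden sum of two starred elements, so both are undefined. The all-unstarred case is precisely partial associativity of $+$ on $A$, Proposition~\ref{prop:gprepea}(x). The case $x=a$, $y=b$, $z=c^*$ unwinds to comparing ``$a+b$ defined and $c\ge a+b$'' (giving $(c\mR(a+b))^*$) with ``$c\ge b$ and $c\mR b\ge a$'' (giving $((c\mR b)\mR a)^*$): the equivalence and the identity $(c\mR b)\mR a=c\mR(a+b)$ are exactly (GPPEA3). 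Dually, $x=a^*$, $y=b$, $z=c$ reduces to (GPPEA4). The genuinely mixed case $x=a$, $y=b^*$, $z=c$ unwinds to comparing ``$b\ge a$ and $b\mR a\ge c$'' (giving $((b\mR a)\mL c)^*$) with ``$b\ge c$ and $b\mL c\ge a$'' (giving $((b\mL c)\mR a)^*$); here (GPPEA3) rewrites the first condition as ``$c+a$ defined and $b\ge c+a$'', (GPPEA4) rewrites the second as the same condition, and $(b\mR a)\mL c=(b\mL c)\mR a$ is Proposition~\ref{prop:gprepea}(xv). Thus $+_p$ is partially associative and (WPPEA1$'$) holds.

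It remains to check (WPPEA3), i.e., that the relation defined by ``$x\preceq y$ iff $x+_p y^R$ is defined iff $y^L+_p x$ is defined'' is a well-defined partial order. Quadrant by quadrant: $a\preceq b$ iff $b\ge a$ for $a,b\in A$; $a^*\preceq b^*$ iff $a\ge b$ for $a^*,b^*\in A^*$; no element of $A^*$ is $\preceq$ an element of $A$; and $a\preceq b^*$ iff $a+b$ is defined, for $a\in A$, $b^*\in A^*$. In the first three quadrants the two conditions ``$x+_p y^R$ defined'' and ``$y^L+_p x$ defined'' reduce to the same inequality, so $\preceq$ is well defined there; in the last quadrant they read ``$a+b$ defined'' versus ``$b+a$ defined''. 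This is the step I expect to be the \emph{main obstacle}: I would isolate as a lemma that in a generalized pre pseudo effect algebra $a+b$ is defined iff $b+a$ is defined (automatic in the commutative case and under the (PEA)-type condition), and if this is not a consequence of (GPPEA1)--(GPPEA4) it has to be added as a hypothesis. Granting it, reflexivity is clear; for antisymmetry, no $\preceq$-cycle can pass between $A$ and $A^*$ (nothing in $A^*$ lies below anything in $A$), so it reduces to antisymmetry of the old order on $A$ and of its reverse on $A^*$; and for transitivity one checks the few non-vacuous chains --- those lying inside $A$ or inside $A^*$ use transitivity of the old order, while the two crossing chains $a\preceq b\preceq c^*$ and $a\preceq b^*\preceq c^*$ follow from the monotonicity facts Proposition~\ref{prop:gprepea}(xii) and (xiii) respectively. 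With (WPPEA1$'$) and (WPPEA2)--(WPPEA4) established, $(A\cup A^*;+_p,{}^R,{}^L,0,0^*)$ is a weak pre pseudo effect algebra.
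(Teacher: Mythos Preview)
Your approach follows the paper's proof very closely: verify (WPPEA1$'$) together with (WPPEA2)--(WPPEA4), the bulk of the work being the case analysis for associativity. The paper is terser --- it only lists the four nontrivial associativity cases and names the identity used in each (partial associativity of $+$, (GPPEA3), (GPPEA4), and Proposition~\ref{prop:gprepea}(xv)), silently discarding the cases with two or more starred arguments --- but the content is identical to yours.

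Where you go beyond the paper is in your treatment of (WPPEA3), and your caution there is justified: the paper's argument checks only the quadrant $a,b\in A$ (where the two defining conditions both reduce, via (GPPEA2), to $a\le b$ in the original order) and then simply declares ``the resulting relation is equal to $\leq$ from $A$ which is partial order'', without addressing the other quadrants at all. In the mixed quadrant $a\in A$, $b^*\in A^*$ the two conditions ``$x+_p y^R$ defined'' and ``$y^L+_p x$ defined'' unwind to ``$a+b$ defined'' and ``$b+a$ defined'', and nothing in (GPPEA1)--(GPPEA4) forces these to coincide. Indeed, the paper's own Example~4.9 exhibits a generalized pre pseudo effect algebra in which $b+a=1$ is defined while $a+b$ is undefined, so in its unitization one of the two characterisations of $a\preceq b^*$ holds and the other does not. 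Your instinct to isolate ``$a+b$ defined iff $b+a$ defined'' as a separate hypothesis (automatic under commutativity or under a (PEA)-type axiom) is therefore the right move; the paper simply does not address this point.
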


First it is necessary to verify that $(A\cup A^*; +_p, 0)$ is partial monoid with neutral element $0$. Since $a + 0 = 0 + a = a \mR 0 = a \mL 0 = a$ it is easy to see that $0$ is neutral element with respect to $+_p$.

For checking associativity, we have 4 possible cases. If all elements are from $A$ it follows from partial associativity of $+$. $a^*\in A^*$, $b, c\in A$ it follows from the fact that $(c\mR b)\mR a = c\mR (a + b)$. The case $c^*\in A^*$, $a, b\in A$ follows from the fact that $a\mL (b + c) = (a\mL b)\mL c$. And finally the case $a, c\in A$, $b^*\in A^*$ follows from the fact that $(b\mR a)\mL c = (b\mL c)\mR a$.

$a +_p a^R = a +_p a^* = (a\mR a)^* = 0^* = (a\mL a)^* = a^* +_p a = a^L +_p a$.

Since $b\mR a$ is defined is equivalent to $a +_p b^R$ is defined and $b\mL a$ is defined is equivalent to $b^L +_p a$ is defined. The resulting relation is equal to $\leq$ from $A$ which is partial order.

Let us assume the $0^* +_p a$ or $a +_p 0^*$  is defined then $0\geq a$, i.e. $a = 0$.

\begin{example}
Let us consider a partially ordered set $(A, \leq)$. When we define operation $x\mL y = x\mR y = 0$, for $y\leq x$ and undefined otherwise and operation $x + y$ is defined, if either $x$ or $y$ are equal to $0$ and in that case the result is the other element. Then $(A; +, \mL, \mR, 0)$ is a generalized pre pseudo effect algebra. Since the operations $\mL$ and $\mR$ coincide such algebras are also generalized pre effect algebras.
\end{example}

\begin{example}
The smallest strict generalized weak pre pseudo effect algebra, i.e. it is not weak pre pseudo effect algebra, is the following. Let us consider 4 element set $\{0, a, b, 1\}$. whose partial order is isomorphic to the order of two-element Boolean algebra, i.e. $0$ is a bottom element, $1$ is a top element and $a$ and $b$ are mutually incomparable. Let us consider operation $+$ defined for pairs $(0,x)$, $(x,0)$ with the result of $x$ and the only other pair which is summable is $b + a$ and it equals to $1$. Corresponding left minus $\mL$ and right minus $\mR$ are trivially defined for pairs $(x,0)$, $(x,x)$. Moreover, $1\mL a = 0$, $1\mL b =  a$ and $1\mR a = b$, $1\mL b = 0$. It is not weak pre pseudo effect algebra since for the element $a$ there is no element $x$ (a candidate for right orthosupplement) such that $a + x = 1$.
\end{example}

\section{Congruences}

Let us consider generalized pre pseudo effect algebras and $\sim$ be a relation of equivalence on underlying set. For any element $a\in A$ we denote $[a]_{\sim}$ the set of all elements which are in relation $\sim$ with $a$, i.e. $[a]_{\sim} = \{t\in A: t\sim a\}$. If the relation will be known, then we will shortly write $[a]$ for the equivalence class of $a$.

We will try to explain the rationale for a condition which need to be added to get some kind of congruence relation.

Let fix the relation of equivalence and let us consider two classes $[a]$ and $[b]$. Operations on sets $[a]$ and $[b]$ are as follows:

\begin{equation}\label{eq:plus}
[a] + [b] = \{m = a' + b'; a'\in [a], b'\in [b]\},\\
\end{equation}
\begin{equation}\label{eq:leftminus}
[a] \mathrel{\mL} [b] = \{m = a' \mL b'; a'\in [a], b'\in [b]\},\\
\end{equation}
\begin{equation}\label{eq:rightminus}
[a] \mathrel{\mR} [b] = \{m = a' \mR b'; a'\in [a], b'\in [b]\},
\end{equation}
provided that the operations in question are defined.

We need to achieve to goals, first, that all elements from the particular set belong to the one class of equivalence and for the second, that each element of this class of equivalence is realized in this way.

To achieve the first goal, we can use the method from effect algebras, i.e. we can define weak congruence conditions for operation $+$, $\mL$ and $\mR$.

$a_1 \sim a_2, b_1\sim b_2$, $a_1\mathbin{\mbox{op}} b_1$ and $a_2 \mathbin{\mbox{op}} b_2$ are defined, then $a_1\mathbin{\mbox{op}} b_1 \sim a_2\mathbin{\mbox{op}} b_2$, where op stands for one of the +, $\mL$, $\mR$.

These conditions allows us to state that elements $[a]\mathbin{\mbox{op}} [b]$ are from one equivalence class (if it is non-empty set), but it is not implied that whole equivalence class $[a]\mathbin{\mbox{op}} [b]$ is realized in this way. In general, it is just a subset.

Let us consider the following conditions: $[a]\mathbin{\mbox{op}} [b]$ is defined (the set is non-empty) and it is a subset of some class of $[t]$, where $t \sim a'\mathbin{\mbox{op}} b'$, for some $a'\sim a$ and $b'\sim b$. Let $t'\in [t]$, then there are $a''\in [a]$ and $b''\in [b]$ such that $t' = a''\mathbin{\mbox{op}} b''$. These conditions lead to the desired fact, that if $[a]\mathbin{\mbox{op}} [b]$ is a non-empty set, then if the op is defined as above it is the whole equivalence class.

To summarize this:

\begin{definition} Let $(A; +, \mL, \mR, 0)$ be a (weak) generalized pre pseudo effect algebra. Let $\sim$ be a relation of equivalence on $A$. Then we say that it is a \emph{weak congruence}, if the following conditions are satisfied (where op is subsequently one of the binary operations $+$, $\mL$, $\mR$):
\begin{itemize}
\item $a_1\sim a_2$, $b_1\sim b_2$ and $\exists$ $a_1\mathbin{\mbox{op}}b_1$ and $\exists$ $a_2\mathbin{\mbox{op}}b_2$, then $a_1\mathbin{\mbox{op}}b_1 \sim a_2\mathbin{\mbox{op}}b_2$.
\end{itemize}

We say that $\sim$ is a \emph{congruence}, if it is a weak congruence and
\begin{itemize}
\item if $[a]\mathbin{\mbox{op}} [b] \subseteq[t]$ is non-empty set, then for each $t'\in [t]$ there are $a'\in [a]$ and $b'\in [b]$ such that $t' = a'\mathbin{\mbox{op}} b'$,
\item if $[a]\mathbin{\mbox{op}} [b]$ is defined, then for any $a'\in [a]$ there is $b'\in [b]$ such that $a'\mathbin{\mbox{op}} b'$ is defined,
\item if $[a]\mathbin{\mbox{op}} [b]$ is defined, then for any $b'\in [b]$ there is $a'\in [a]$ such that $a'\mathbin{\mbox{op}} b'$ is defined,
\end{itemize}
where again op is one of the binary operations.
\end{definition}

In the light of the previous reasoning we say that $[a]\mathbin{\mbox{op}}[b]$ is defined, iff the underlying set defined by equations (\ref{eq:plus})-(\ref{eq:rightminus}) is non-empty. If we consider a congruence relation $\sim$, then we can simply form a factor algebra $A/\sim = \{[a]; a\in A\}$. Then we can try to prove that $(A/\sim; +, \mL, \mR, [0])$ is again (weak) generalized pre effect algebra.

Let us try to prove some preliminary results:

Let $[a] + [b] = [0]$, then $[a] = [b] = [0]$.

Indeed, for any $c\in [0]$ there are $a'\in [a]$ and $b'\in [b]$ such that $c = a' + b'$. Since $0\in [0]$, there are $a''\in [a]$, $b''\in [b]$ such that $a'' + b'' = 0$. From Proposition it follows that $a'' = b'' = 0$ and $0\in [a], [b]$. Thus $[a] = [b] = [0]$.

Let $[a]\mL [b]$ and $[b]\mL [a]$ be defined. Then $[a] \mL [b] = [0]$:

Let us denote $[c] := [a]\mL [b]$ and $[d] := [b]\mL [a]$. For $a\in [a]$ there is $b'\in [b]$ such thant $a\mL b'$ is defined and $a\mL b' \sim c$. Similarly, for $a\in [a]$ there is $b''\in [b]$ such that $b''\mL a$ is defined and $b''\mL a \sim d$. This means that $b''\geq a \geq b'$. From Proposition we have that $b''\mL b'\geq (a\mL b') + (b''\mL a)$. Since $b'', b\in [b]$ $b''\mL b'\sim 0$ and $b''\mL b' \geq c' + d'$ for some $c'\in [c]$ and $d'\in [d]$. We see that $[c] + [d]$ is defined. For any $t\in [0]$ there are $e\in [c]$ and $f\in [d]$ such that $t\geq e + f$. For $t = 0$ we have $e'\in [c]$ and $f'\in [d]$ such that $0 \geq e' +f'$ which means that $e' + f' = 0$ and thus $e' = f' = 0$. Thus $[c] = [d] = [0]$.

\medskip\textbf{Unsorted thoughts:}\medskip

The attempt to prove that factor algebra is again gwprepea failed. It was not possible to proved that if $[a]\mL [b]$ and $[b]\mL[a]$ are defined, then $[a] = [b]$. That is, the antisymmetry in the definition of partial order.

Since $x \mL y = 0$, or ($x \mR y = 0$) does not imply that $x$ and $y$ equal, we have just implication: if $[x] = [y]$, then $[x] \mL [y] = [0]$, or $[x]\mR [y] = [0]$. Taking into consideration the trivial congruence ($x=x$), we have that in general $[x]\mL [y] = [0]$ (or $[x]\mR [y] = [0]$) does not imply $[x] = [y]$, i.e. $x\sim y$.

From the models generated, operation $+$ does not uniquely imply the remaining structure. As in example above, the trivial operation $+$ containing trivial sums with $0$ (at least one of the summand is $0$) can be defined for arbitrary order.

On the other hand, at least for models generated (up to 7 elements and partially for 8), once the $\mL$ (or $\mR$) is fixed there is the unique structure of gwprepea having this operation. It seems that fixing one of the minus operations implies the remaing two operations. Unfortunately, for now, I was unable to prove it from axioms. Other catch might be that this only holds for finite structures, or the counterexample need to be of much larger cardinality.

If $a + b = a + c$ and $b\geq c$, then $b\mL c = 0$.

If $b + a = c + a$ and $b\geq c$, then $b\mR c = 0$.

\section{Modifications of Riesz decomposition property (RDP) and Riesz interpolation property (RIP)}

Let us recall definitions of Riesz decomposition and interpolation properties.

\begin{definition}
Let for any $a_1, a_2, b_1, b_2\in A$ holding $a_1 + a_2 = b_1 + b_2$, there are elements $c_{11}, c_{12}, c_{21}, c_{22}\in A$ such that the sums in rows and columns equal to respective elements:
\begin{center}\begin{tabular}{ccc}
 & $b_1$ & $b_2$ \\
$a_1$ & $c_{11}$ & $c_{12}$ \\
$a_2$ & $c_{21}$ & $c_{22}$ \\
\end{tabular}
\end{center}
That is $a_1 = c_{11} + c_{12}$, $a_2 = c_{21} + c_{22}$, $b_1 = c_{11} + c_{21}$ and $b_2 = c_{12} + c_{22}$. Then $A$ satisfies the so called \emph{Riesz decomposition property}, (RDP) for short.
\end{definition}

\begin{definition}
If for any $a, b_1, b_2\in A$ such that $a \leq b_1 + b_2$, there are elements $a_1, a_2\in A$ satisfying $a_1\leq b_1$, $a_2\leq b_2$ and $a = a_1 + a_2$. Then $A$ satisfies \emph{Riesz interpolation property}, (RIP) for short.
\end{definition}

Let us consider the following examples demonstrating, that (RDP) does not imply (RIP). The second example has linear order.

\begin{example}$\mbox{}$\\
\begin{center}
\begin{tabular}{|c|c|c|c|c|}
\hline
$+$ & \textbf{0} & \textbf{1} & \textbf{2} & \textbf{3}\\
\hline
\textbf{0} & 0 & 1 & 2 & 3\\
\hline
\textbf{1} & 1 & . & . & .\\
\hline
\textbf{2} & 2 & . & 3 & .\\
\hline
\textbf{3} & 3 & . & . & .\\
\hline
\end{tabular}
\quad
\begin{tabular}{|c|c|c|c|c|}
\hline
$\mL = \mR$ & \textbf{0} & \textbf{1} & \textbf{2} & \textbf{3}\\
\hline
\textbf{0} & 0 & . & . & .\\
\hline
\textbf{1} & 1 & 0 & . & .\\
\hline
\textbf{2} & 2 & . & 0 & .\\
\hline
\textbf{3} & 3 & 0 & 2 & 0\\
\hline
\end{tabular}
\end{center}
\end{example}

\begin{example}$\mbox{}$\\

\begin{center}
\begin{tabular}{|c|c|c|c|c|}
\hline
$+$ & \textbf{0} & \textbf{1} & \textbf{2} & \textbf{3}\\
\hline
\textbf{0} & 0 & 1 & 2 & 3\\
\hline
\textbf{1} & 1 & 3 & . & .\\
\hline
\textbf{2} & 2 & . & . & .\\
\hline
\textbf{3} & 3 & . & . & .\\
\hline
\end{tabular}
\quad
\begin{tabular}{|c|c|c|c|c|}
\hline
$\mL = \mR$ & \textbf{0} & \textbf{1} & \textbf{2} & \textbf{3}\\
\hline
\textbf{0} & 0 & . & . & .\\
\hline
\textbf{1} & 1 & 0 & . & .\\
\hline
\textbf{2} & 2 & 0 & 0 & .\\
\hline
\textbf{3} & 3 & 1 & 0 & 0\\
\hline
\end{tabular}
\end{center}

In both cases there are no nontrivial decompositions for $a_1 + a_2 = b_1 + b_2$. Indeed, there is either at least one of the elements equal to 0, or the decomposition lead to the case $a_1 = b_1$, $a_2 = b_2$. To see that RIP property fails it is sufficient to consider $1\leq 2 + 2 = 3$, but there are no elements satisfing $1 = a_1 + a_2$. (In the second case $2\leq 1 + 1 = 3$.)
\end{example}

On the other hand, there is also the example of RIP, which does not satisfy RDP:

\begin{center}
\begin{tabular}{|c|c|c|c|c|c|}
\hline
$+$ & \textbf{0} & \textbf{1} & \textbf{2} & \textbf{3} & \textbf{4}\\
\hline
\textbf{0} & 0 & 1 & 2 & 3 & 4\\
\hline
\textbf{1} & 3 & 4 & . & . & .\\
\hline
\textbf{2} & 4 & 4 & . & . & .\\
\hline
\textbf{3} & . & . & . & . & .\\
\hline
\textbf{4} & . & . & . & . & .\\
\hline
\end{tabular}
\quad
\begin{tabular}{|c|c|c|c|c|c|}
\hline
$\mL = \mR$ & \textbf{0} & \textbf{1} & \textbf{2} & \textbf{3} & \textbf{4}\\
\hline
\textbf{0} & 0 & . & . & . & .\\
\hline
\textbf{1} & 1 & 0 & . & . & .\\
\hline
\textbf{2} & 2 & 0 & 0 & . & .\\
\hline
\textbf{3} & 3 & 1 & . & 0 & .\\
\hline
\textbf{4} & 4 & 2 & 2 & 0 & 0\\
\hline
\end{tabular}
\end{center}

Indeed, for $1 + 2 = 2 + 1$ there is no RDP decomposition. To see that RIP is satisfied, it is sufficient to check it for the case $3\leq2+2 = 4$. Since $1\leq 2$, the desired RIP decomposition is $3 = 1 + 1$. The other cases are trivial. (For the completeness, we will list the trivial cases. 1) One of the $b_i$ is equal to $0$; 2) $a$ is less than or equal to one of the $b_i$; 3) $a$ is equal to $b_1 + b_2$.)

Since $+$ no longer defines partial order on $A$ the (RDP) and (RIP) properties no longer depend only on partial order. Indeed, there are generalized weak pre pseudo effect algebras, having the same order, but one satisfies the (RDP) (or (RIP)), and the other does not.

It may be interesting to try to investigate alternative definition of (RDP) (or (RIP)) properties. The main idea is to replace equality of two elements  with the property that their difference (left or right) is equal to $0$. If $a = b$, then certainly $a \mL b = 0$ (and $a\mR b = 0$), but from $a\mL b = 0$, or $a\mR b = 0$ in general does not follow $a = b$.

Left modified RIP, (LmodRIP): $ (a \mL a_1) \mL a_2 = 0 = a \mL (a_1 + a_2)$

Right modified RIP, (RmodRIP): $ (a \mR a_2) \mR a_1 = 0 = a \mR (a_1 + a_2)$

Left-Right modified RIP, (LRmodRIP) $ (a \mL a_1)\mR a_2 = 0$.

According to Proposition 3.2 (xiv), the concept of Right-Left modified RIP, i.e. $ (a \mR a_2) \mL a_1 = 0$ coincides with Left-Right modified RIP.

The following examples present the LmodRIP which is neither RmodRIP nor LRmodRIP. The "transposed" example, Example \ref{ex:trans}, presents RmodRIP which is not LmodRIP or LRmodRIP.

\begin{example}$\mbox{}$\\

\begin{center}
\begin{tabular}{|c|c|c|c|c|c|c|c|}
\hline
$+$ & \textbf{0} & \textbf{1} & \textbf{2} & \textbf{3} & \textbf{4} & \textbf{5} & \textbf{6} \\
\hline
\textbf{0} & 0 & 1 & 2 & 3 & 4 & 5 & 6 \\
\hline
\textbf{1} & 1 & . & 5 & 6 & 6 & . & . \\
\hline
\textbf{2} & 2 & . & 4 & 4 & . & . & . \\
\hline
\textbf{3} & 3 & . & . & . & . & . & . \\
\hline
\textbf{4} & 4 & . & . & . & . & . & . \\
\hline
\textbf{5} & 5 & . & 6 & 6 & . & . & . \\
\hline
\textbf{6} & 6 & . & . & . & . & . & . \\
\hline
\end{tabular}
\quad
\begin{tabular}{|c|c|c|c|c|c|c|c|}
\hline
$\mL$ & \textbf{0} & \textbf{1} & \textbf{2} & \textbf{3} & \textbf{4} & \textbf{5} & \textbf{6} \\
\hline
\textbf{0} & 0 & . & . & . & . & . & . \\
\hline
\textbf{1} & 1 & 0 & . & . & . & . & . \\
\hline
\textbf{2} & 2 & . & 0 & . & . & . & . \\
\hline
\textbf{3} & 3 & . & 0 & 0 & . & . & . \\
\hline
\textbf{4} & 4 & . & 3 & 0 & 0 & . & . \\
\hline
\textbf{5} & 5 & 2 & 0 & . & . & 0 & . \\
\hline
\textbf{6} & 6 & 4 & 3 & 0 & 0 & 3 & 0 \\
\hline
\end{tabular}
\end{center}

\begin{center}
\vbox to 0cm {\vspace{-1.5cm}\hbox{\includegraphics{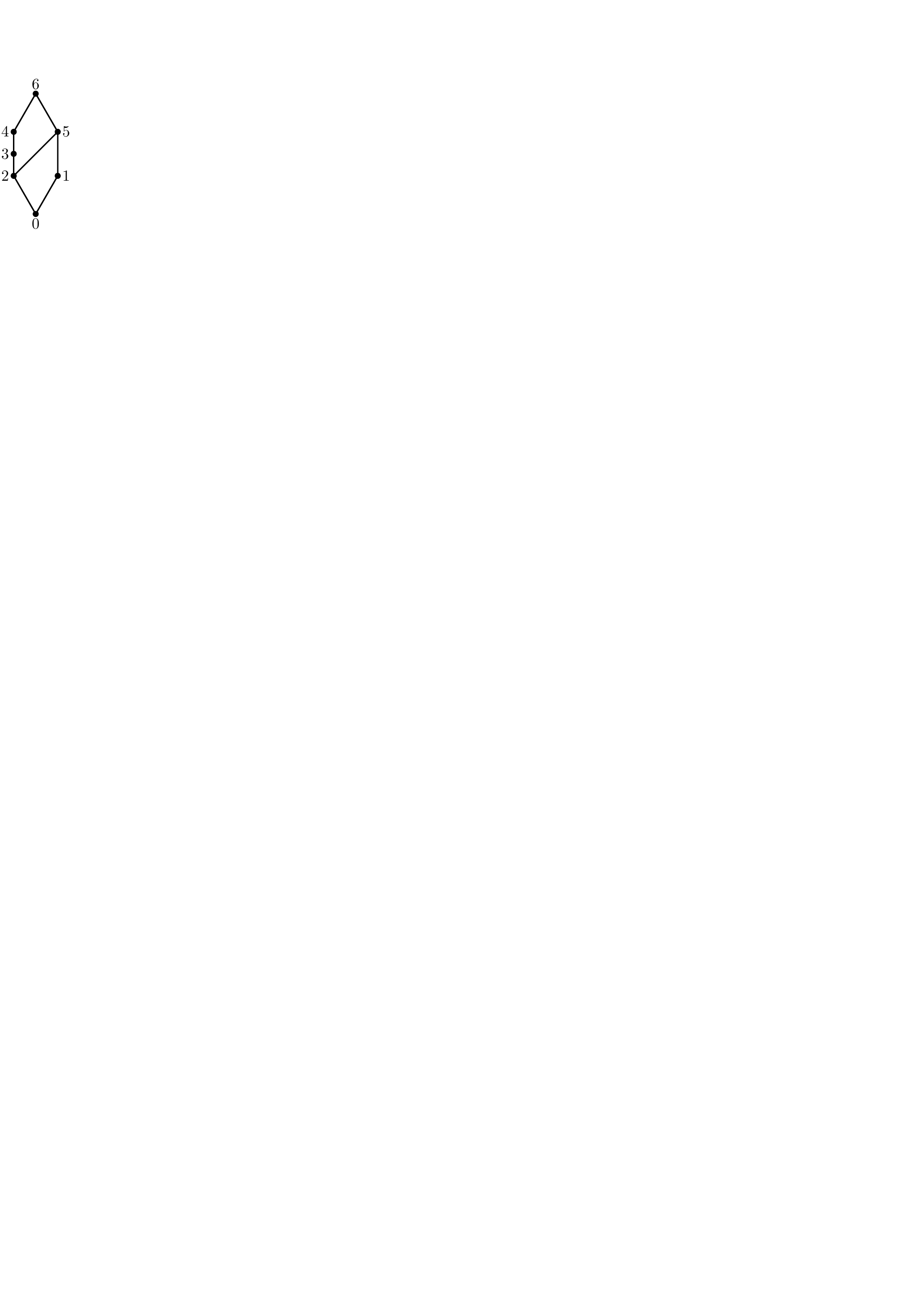}}}
\begin{tabular}{|c|c|c|c|c|c|c|c|}
\hline
$\mR$ & \textbf{0} & \textbf{1} & \textbf{2} & \textbf{3} & \textbf{4} & \textbf{5} & \textbf{6} \\
\hline
\textbf{0} & 0 & . & . & . & . & . & . \\
\hline
\textbf{1} & 1 & 0 & . & . & . & . & . \\
\hline
\textbf{2} & 2 & . & 0 & . & . & . & . \\
\hline
\textbf{3} & 3 & . & 0 & 0 & . & . & . \\
\hline
\textbf{4} & 4 & . & 2 & 2 & 0 & . & . \\
\hline
\textbf{5} & 5 & 0 & 1 & . & . & 0 & . \\
\hline
\textbf{6} & 6 & 0 & 5 & 5 & 1 & 0 & 0 \\
\hline
\end{tabular}
\end{center}

Indeed, $6\leq 6 = 1 + 3$, but for any $s\leq 1$, $t\leq 3$, $(6\mL s)\mR t \neq 0$, thus LRmodRIP fails.


Also RmodRIP fails, $4\leq 1 + 3 = 6$, but there is no $t\leq 1$, $s\leq 3$, such that $t + s = 4=: e$ (the only element such that $4\mR e = 0$).

\end{example}

\begin{example}$\label{ex:trans}\mbox{}$\\

\begin{center}
\begin{tabular}{|c|c|c|c|c|c|c|c|}
\hline
$+$ & \textbf{0} & \textbf{1} & \textbf{2} & \textbf{3} & \textbf{4} & \textbf{5} & \textbf{6} \\
\hline
\textbf{0} & 0 & 1 & 2 & 3 & 4 & 5 & 6 \\
\hline
\textbf{1} & 1 & . & . & . & . & . & . \\
\hline
\textbf{2} & 2 & 5 & 4 & . & . & 6 & . \\
\hline
\textbf{3} & 3 & 6 & 4 & . & . & 6 & . \\
\hline
\textbf{4} & 4 & 6 & . & . & . & . & . \\
\hline
\textbf{5} & 5 & . & . & . & . & . & . \\
\hline
\textbf{6} & 6 & . & . & . & . & . & . \\
\hline
\end{tabular}
\quad
\begin{tabular}{|c|c|c|c|c|c|c|c|}
\hline
$\mL$ & \textbf{0} & \textbf{1} & \textbf{2} & \textbf{3} & \textbf{4} & \textbf{5} & \textbf{6} \\
\hline
\textbf{0} & 0 & . & . & . & . & . & . \\
\hline
\textbf{1} & 1 & 0 & . & . & . & . & . \\
\hline
\textbf{2} & 2 & . & 0 & . & . & . & . \\
\hline
\textbf{3} & 3 & . & 0 & 0 & . & . & . \\
\hline
\textbf{4} & 4 & . & 2 & 2 & 0 & . & . \\
\hline
\textbf{5} & 5 & 0 & 1 & . & . & 0 & . \\
\hline
\textbf{6} & 6 & 0 & 5 & 5 & 1 & 0 & 0 \\
\hline
\end{tabular}
\end{center}

\begin{center}
\begin{tabular}{|c|c|c|c|c|c|c|c|}
\hline
$\mR$ & \textbf{0} & \textbf{1} & \textbf{2} & \textbf{3} & \textbf{4} & \textbf{5} & \textbf{6} \\
\hline
\textbf{0} & 0 & . & . & . & . & . & . \\
\hline
\textbf{1} & 1 & 0 & . & . & . & . & . \\
\hline
\textbf{2} & 2 & . & 0 & . & . & . & . \\
\hline
\textbf{3} & 3 & . & 0 & 0 & . & . & . \\
\hline
\textbf{4} & 4 & . & 3 & 0 & 0 & . & . \\
\hline
\textbf{5} & 5 & 2 & 0 & . & . & 0 & . \\
\hline
\textbf{6} & 6 & 4 & 3 & 0 & 0 & 3 & 0 \\
\hline
\end{tabular}
\end{center}

Mirror arguments of the previous example. Since in Example~\ref{ex:trans} are operations $\mL$ and $\mR$ exchanged.

Indeed, $6\leq 6 = 3 + 1$, but for any $s\leq 3$, $t\leq 1$, $(6\mL s)\mR t \neq 0$, thus LRmodRIP fails.

Also LmodRIP fails, $4\leq 3 + 1 = 6$, but there is no $t\leq 3$, $s\leq 1$, such that $t + s = 4=: e$ (the only element such that $4\mL e = 0$).

\end{example}

\medskip

Situation for RDP is little bit more complicated. Basically, we have 5 equalities to satisfy and each equality can be modified as in the case of modified RIP to 3 kinds. Thus counting unmodified + 3 other cases we have $4^5  - 1= 1023$ possibilities to modify RDP property (1 case leads exactly to RDP).

\end{document}